\documentclass[10pt]{amsart}
\input{epsf.tex}
\usepackage{latexsym,amsfonts,amssymb,epsfig,verbatim}
\usepackage{amsmath, latexsym, graphics, textcomp}

\setlength{\parskip}{2ex} \setlength{\parindent}{0in}

\newcommand{\nc}{\newcommand}
\nc{\dmo}{\DeclareMathOperator}
\nc{\nt}{\newtheorem}

\theoremstyle{plain}
\nt{theorem}{Theorem}[section]
\nt*{main}{Main Theorem}
\nt{lemma}[theorem]{Lemma}
\nt{conjecture}[theorem]{Conjecture}
\nt{fact}{Fact}[section]
\nt{q}{Question}

\nt{remark}[theorem]{Remark}

\dmo{\Mod}{Mod}
\dmo{\PMod}{PMod}
\dmo{\Homeo}{Homeo}

\nc{\Z}{\mathbb{Z}}
\nc{\R}{\mathbb{R}}

\nc{\C}{\mathcal{C}}
\nc{\I}{\mathcal{I}}

\nc{\margin}[1]{\marginpar{\tiny  #1 }}

\begin{document}

\title{Two-generator subgroups of the pure braid group}

\author{Christopher J Leininger}
\author{Dan Margalit}

\address{Christopher Leininger \\ Department of Mathematics \\ University of Illinois at Urbana-Champaign \\ 1409 W. Green St. \\ Urbana, IL 61801}

\email{clein@math.uiuc.edu}

\address{Dan Margalit \\ Department of Mathematics \\ 503 Boston Ave \\ Tufts University \\ Medford, MA 02155}
\email{dan.margalit@tufts.edu}

\thanks{Both authors were supported in part by the NSF}

\keywords{pure braid group, mapping class group, free group}

\subjclass[2000]{Primary: 20F36; Secondary: 20F28}

\maketitle

\begin{center}\today\end{center}

\begin{abstract}
We show that any two elements of the pure braid group either commute or generate a free group, settling a question of Luis Paris.  Our proof involves the theory of 3-manifolds and the theory of group actions on trees.
\end{abstract}

 \section{Introduction} \label{introsect}

Let $PB_n$ denote the pure braid group on $n$ strands; see \cite{artin} for the definition.  During a conference at Banff in 2004, Luis
Paris asked the second author whether or not two noncommuting elements of $PB_n$ could satisfy a nontrivial relation.
Our Main Theorem answers this question.

\begin{main}
\label{mainthrm}
Let $n \geq 0$, and let $g,h \in PB_n$.  Either $g$ and $h$ commute, or they generate a free group.
\end{main}

Let $F_k$ denote the free group of rank $k$, and let $\langle g,h\rangle$ denote the group generated by elements $g$
and $h$.  Since $PB_n$ is torsion free, our Main Theorem has a sharper formulation, namely, the group $\langle g,h
\rangle$ is isomorphic to the trivial group, $\Z$, $\Z^2$, or $F_2$.

As $PB_0 \cong PB_1 \cong 1$ and $PB_2 \cong \Z$, the Main Theorem is vacuous in these cases.  Also, it follows from
Artin's presentation for $PB_n$ that $PB_3 \cong F_2 \times \Z$ \cite[p. 120]{artin}, and so the Main Theorem is easy
to prove in that case.  For $n \geq 4$, the only case where the Main Theorem was already known was the case where $g$ and $h$ are both positive multitwists \cite[Theorem 3.2]{hht} \cite{ishida}.

Our Main Theorem is very specific to the pure braid group.  That is, for $n \geq 3$, the subgroup of $B_n$ generated by $PB_n$ and a single half-twist $g$ is an index 2 supergroup of $PB_n$, and does not satisfy the conclusion of the Main Theorem; indeed, there is a Dehn twist $h$ so that $g$ does not commute with $h$, but $ghgh=hghg$ (see \cite[Lemma 2.3]{parislab}).

In the early 1980's, Ivanov and McCarthy independently proved that, given two elements $g$ and $h$ of the braid group
(more generally, the mapping class group of a surface), there is an $N>0$ so that $g^N$ and $h^N$ either commute
or generate a free group \cite{ivanovtits,mctits}.  Hamidi-Tehrani gave an algorithm, when $g$ and $h$ are pseudo-Anosov, for finding such a power $N$ \cite[Theorem 3.19]{hhtthesis}.  Fujiwara also provided criteria for finding an appropriate $N$ \cite[Proposition 1.1 and Theorem 3.1]{fujiwara}, and has announced that one can find such an $N$ that depends only on the surface \cite{fujiwara2}. Our Main Theorem can be rephrased as saying that for $PB_n$, we can take $N=1$.  Also, as an immediate consequence, for the braid group $B_n$, one can take $N=n!$.

Appealing to Fujiwara's work \cite{fujiwara}, Mangahas proved that any finitely generated subgroup of a mapping class
group that is not virtually abelian has exponential growth, with the growth rate bounded away from $1$ by a constant
depending only on the surface, and not on the generating set \cite[Theorem 1.1]{mangahas}.  An immediate consequence of
our Main Theorem is that any nonabelian subgroup $G < PB_n$ has growth rate at least $3$, independently of the
generating set, and independently of $n$ (compare with the linear setting \cite{breuillardgelander}).

\noindent {\bf Acknowledgments.}  We would like to thank Javier Aramayona, Mladen Bestvina, Tom Church, Nathan
Dunfield, Daniel Groves, Stephen Humphries, Ilya Kapovich, Lars Louder, and Luis Paris for helpful conversations and
suggestions.  We would also like to thank Benson Farb for hosting a conference on Torelli groups in 2006, where this
project was begun. Finally, we would especially like to thank Nathan Broaddus for discussing the problem and for
writing a computer program that searched for relations in the pure braid group.

\section{Reduction to mapping class groups} \label{reduction}

The center of $PB_n$ is isomorphic to $\Z$, and the quotient of $PB_n$ by its center is naturally isomorphic to
$\PMod(S_{0,n+1})$, the \textit{pure mapping class group} of the $(n+1)$-times punctured sphere.  This is the subgroup
of the mapping class group $\Mod(S_{0,n+1})$ consisting of those mapping classes that fix each puncture.  Moreover,
$PB_n$ splits over its center:
\[ PB_n \cong \PMod(S_{0,n+1}) \times \Z \]
(see \cite[\S 8.2]{primer}).

The Main Theorem follows easily from the next theorem, whose proof will occupy most of the paper.
\begin{theorem} \label{modmain}
Let $n \geq 0$ and let $g,h \in \PMod(S_{0,n})$.  The group $\langle g,h \rangle$ is either abelian or is isomorphic to $F_2$.
\end{theorem}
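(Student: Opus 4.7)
The natural framework is strong induction on the complexity of $S_{0,n}$, combined with the Nielsen--Thurston classification. The base cases are immediate: $\PMod(S_{0,n})$ is trivial for $n \leq 3$, and $\PMod(S_{0,4})$ is virtually free, so the conclusion is easy. For $n \geq 5$, given non-commuting $g, h$, my plan is to split into cases based on Nielsen--Thurston type, using the fact that $\PMod(S_{0,n})$ is torsion-free (since it embeds in the torsion-free group $PB_{n-1}$), so reducible elements act with infinite order on each invariant piece.

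Case A: both $g$ and $h$ are pseudo-Anosov. If they share an invariant foliation, then by the standard classification of centralizers of pseudo-Anosovs they have the same pair of foliations and lie in a common virtually cyclic subgroup; torsion-freeness forces this subgroup to be cyclic, so $g$ and $h$ commute. Otherwise they have four distinct attracting/repelling fixed points on $\mathcal{PMF}(S_{0,n})$, and I would play ping-pong there to produce $F_2$. The catch is that the classical ping-pong argument of Ivanov--McCarthy only gives $F_2$ after passing to powers; to achieve $N=1$ one must sharpen the dynamics, and this is a natural place for the 3-manifold input, e.g.\ studying the hyperbolic mapping torus of $g$ (or of a product in $\langle g,h\rangle$) with its cusps at the punctures, and using rigidity to rule out short relations.

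Case B: at least one of $g,h$ is reducible, with canonical reduction systems $\mathcal{A}(g), \mathcal{A}(h)$. If $\langle g,h\rangle$ fixes a simple closed curve (for instance a common element of the two systems), cut along it: the restrictions land in pure mapping class groups of lower complexity, and the inductive hypothesis applies, after which one recombines the pieces through the exact sequence associated to the splitting. If no common invariant curve exists, I would look for a tree on which $\langle g,h\rangle$ acts without a global fixed point---for example the Bass--Serre tree dual to a splitting of the cut-open surface, or a $\langle g,h\rangle$-minimal tree built from invariant sublaminations or an appropriate simplicial subtree of the curve complex. A fixed-point-free action together with control on edge and vertex stabilizers (via induction and Case A) would, through Bass--Serre theory, force $\langle g,h\rangle$ to split as a free product yielding $F_2$.

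The main obstacle, and the genuinely new content beyond Ivanov--McCarthy, is sharpening ``some power is free'' to ``the elements themselves are free.'' In Case A this means excluding relations between first powers of two pseudo-Anosovs with disjoint foliation sets, where one cannot simply replace $g$ by a large power to gain Schottky-type dynamics; this is the step where hyperbolic 3-manifold geometry probably becomes indispensable. In Case B the delicate point is the recombination: even when the restrictions to subsurfaces are free or abelian by induction, a relation in $\langle g,h\rangle$ could in principle arise from how Dehn twists about the reducing curves interact with $g$-conjugates of $h$-twists, and the tree action has to be rigid enough to certify that any such relation forces commutation. I expect the bulk of the paper to be organizing this combination argument so that the inductive hypothesis transfers across splittings without any loss that would re-introduce a nontrivial power.
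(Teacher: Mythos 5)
Your proposal correctly identifies the base cases and, to your credit, correctly isolates the central difficulty (upgrading the Ivanov--McCarthy ``powers generate $F_2$'' statement to first powers), but at both of the decisive points you leave the difficulty unresolved, and the mechanisms you gesture at are not the ones that work. The paper does \emph{not} run a Nielsen--Thurston case analysis on $g$ and $h$, and it does \emph{not} cut along reduction systems. Instead the induction is on the number of punctures via the puncture-filling maps $\rho_k:\PMod(S_{0,n})\to\PMod(S_{0,n-1})$ from the Birman exact sequence, and the trichotomy is on the \emph{image} subgroups $\bar G_k=\langle\rho_k(g),\rho_k(h)\rangle$: either some $\bar G_k\cong F_2$ (then $\langle g,h\rangle\cong F_2$ by Hopficity of free groups), or every $\bar G_k$ is abelian, in which case Ivanov's dichotomy says some $\bar G_k$ is either pseudo-Anosov cyclic or fixes a curve $v$. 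In the first of these subcases the 3-manifold input is not ``rigidity ruling out short relations'' in the mapping torus of $g$ --- that is not an argument --- but rather the identification $\rho_k^{-1}(\langle f\rangle)\cong\pi_1(M_f)$ for the fibered, irreducible, atoroidal mapping torus $M_f$ of the pseudo-Anosov generator $f$ of $\bar G_k$, followed by the Jaco--Shalen theorem that a two-generator subgroup of such a $\pi_1(M_f)$ is either of finite index or isomorphic to $1$, $\Z$, $\Z^2$, or $F_2$; finite index is excluded because $H_1(M_f)\cong\Z^{n-1}$ forces every finite-index subgroup to need at least $n-1\geq 4$ generators. Nothing in your Case A supplies a substitute for this step.

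The second genuine gap is your Case B, where you acknowledge that the ``recombination'' after cutting is delicate but offer no mechanism to control it; indeed relations really can hide in how twists about reducing curves interact, and Bass--Serre theory applied to a splitting of the cut surface does not by itself see them. The paper's route is different and is the second place you are missing an idea: when every $\bar G_k$ is abelian, the commutator subgroup $[G,G]$ lies in the intersection of all $\ker\rho_k$, which is Whittlesey's Brunnian subgroup $W_n$, and for $n\geq 5$ every nontrivial element of $W_n$ is pseudo-Anosov. The tree that is acted on is not dual to a splitting of a cut-open surface but is the preimage $\Pi_k^{-1}(v)\subset\C(S_{0,n})$ of the fixed curve $v$ under puncture-filling, which is $\pi_1$-equivariantly the Bass--Serre tree $T_v$; the Kapovich--Weidmann theorem for two-generated groups acting on trees, sharpened by Ivanov's ``a power fixing a curve implies the element fixes it,'' produces a commutator fixing a vertex of $\C(S_{0,n})$, hence a non-pseudo-Anosov element of $W_n$, hence the identity, so $G$ is abelian. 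Without the Brunnian-subgroup observation and the Kapovich--Weidmann input, your reducible case does not close. (A minor further point: $\PMod(S_{0,4})$ is exactly $F_2$ by the Birman exact sequence; ``virtually free'' alone would need the extra remark that a torsion-free virtually free group is free.)
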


\begin{proof}[Proof of the Main Theorem assuming Theorem~\ref{modmain}.]

Let $G$ denote $\langle g, h \rangle$, and let $\bar G$ denote the image of $G$ under the map $PB_n \to PB_n/\Z \cong \PMod(S_{0,n+1})$.  By Theorem \ref{modmain}, $\bar G$ is either abelian or is isomorphic to $F_2$.  If $\bar G$ is abelian, then, as $G$ is a split central extension of $\bar G$ by an abelian group, it follows that $G$ is abelian.  If $\bar G \cong F_2$, then we have a sequence of maps $F_2 \to G \to \bar G \cong F_2$ where the first map sends the generators of $F_2$ to $g$ and $h$.  Since free groups are Hopfian \cite[Theorem 2.13]{magkarsol}, and since both maps are surjective, it follows that both maps are isomorphisms, and so $G \cong F_2$.
\end{proof}

\section{Filling a puncture} \label{fillerup}

In Section~\ref{section:proof}, we will prove Theorem~\ref{modmain} by means of induction.  The inductive step is
realized by ``filling in'' one of the punctures of $S_{0,n}$.  In this section, we recall some group theoretic,
combinatorial, and geometric consequences of the filling operation.

\subsection{The Birman exact sequence} \label{bes}

The surface $S_{0,n}$ is obtained from the 2-sphere by removing a set of $n$ points, say $p_1, \dots, p_n$. Filling the
$k^{th}$ puncture back in, we obtain $S_{0,n-1}$, so that $S_{0,n} = S_{0,n-1} - \{p_k\}$.  Via the inclusion $i_k :
S_{0,n} \to S_{0,n-1}$, any homeomorphism of $S_{0,n}$ that induces the trivial permutation of the punctures extends
uniquely to a homeomorphism of $S_{0,n-1}$, and so induces a surjective homomorphism $\rho_k:\PMod(S_{0,n}) \to
\PMod(S_{0,n-1})$.  The Birman exact sequence \cite[\S1]{birman} describes the kernel of this map for $n \geq 4$:
\[ 1 \to \pi_1(S_{0,n-1},p_k) \to \PMod(S_{0,n}) \stackrel{\rho_k}{\to} \PMod(S_{0,n-1}) \to 1.\]

\subsection{Curve complexes and trees} \label{cctrees}

The complex of curves of a surface $S$ is the simplicial complex whose vertices are isotopy classes of essential simple
closed curves in $S$, and whose $k$-simplices are sets of $k+1$ isotopy classes of pairwise disjoint curves. We denote
this complex $\C(S)$.  Here, a curve is \emph{essential} if it is neither homotopic to a point nor a puncture.

One sometimes modifies the definition of the curve complex for $S_{0,4}$ so as to obtain the Farey graph.
We do not do this, and hence $\C(S_{0,4})$ is a countable set of vertices.

The inclusion $i_k: S_{0,n} \to S_{0,n-1}$ induces a map $\Pi_k$ from the set of isotopy classes of curves in $S_{0,n}$
to the set of isotopy classes of curves in $S_{0,n-1}$. There is a well defined subcomplex $\Pi_k^{-1}(v)$ of
$\C(S_{0,n})$, which consists of the union of all simplices of $u \subset \C(S_{0,n})$, such that upon filling in the
$k^{th}$ puncture, all vertices of $u$ represent $v$ in $S_{0,n-1}$. Via the inclusion $\pi_1(S_{0,n-1}) \to
\PMod(S_{0,n})$ from the Birman exact sequence, we obtain an action of $\pi_1(S_{0,n-1})$ on $\Pi_k^{-1}(v)$. With Kent
and Schleimer, the first author showed that $\Pi_k^{-1}(v)$ is $\pi_1(S_{0,n-1})$--equivariantly isomorphic to the
Bass--Serre tree $T_v$ associated to the splitting of $\pi_1(S_{0,n-1})$ defined by $v$ \cite[Theorem 7.1]{KLS}.

We note that if $G$ is a subgroup of $\PMod(S_{0,n-1})$ that fixes a vertex $v$ of $\C(S_{0,n-1})$, then $\rho_k^{-1}(G)$ preserves $\Pi_k^{-1}(v)$, that is, $\rho_k^{-1}(G)$ acts on the tree $\Pi_k^{-1}(v) \cong T_v$.

\subsection{Brunnian subgroups} \label{brunniansect}

Whittlesey considered the \emph{Brunnian subgroup} of the group $\PMod(S_{0,n})$, which is defined as
\[ W_n = \bigcap_{k=1}^n \ker(\rho_k) < \PMod(S_{0,n}), \]
and she proved that for $n \geq 5$ each nontrivial element of $W_n$ is irreducible, which means that it does not
fix any simplex of $\C(S_{0,n})$ \cite[Theorem 4.1]{whittle}.  Since the group $\PMod(S_{0,n})$ is torsion free, the
Nielsen--Thurston classification gives that irreducible elements of $\PMod(S_{0,n})$---in particular, elements of
$W_n$---are pseudo-Anosov \cite[\S 5]{thurston}

\subsection{Surface bundles} \label{bundles}

Any nontrivial element $f$ of  $\PMod(S_{0,n-1})$ generates an infinite cyclic group $\langle f \rangle <
\PMod(S_{0,n-1})$.   The mapping torus $M_f$ of $f$ is an $(S_{0,n-1})$--bundle over the circle, and
\[\pi_1(M_f) \cong \rho_k^{-1}(\langle f \rangle).\]
Since the universal cover of $M_f$ is homeomorphic to $\R^3$, it follows that $M_f$ is irreducible, that is, every 2-sphere bounds a 3-ball.

If $f$ is pseudo-Anosov, then any conjugacy class of $\pi_1(S_{0,n-1})$ fixed by a nontrivial power of $f$ is peripheral.  It follows that $M_f$ is atoroidal, that is, any immersed, $\pi_1$--injective torus in $M_f$ is peripheral.

When $f$ is pseudo-Anosov, the fact that $M_f$ is irreducible and atoroidal also follows immediately from the theorem of Thurston that states that $M_f$ admits a complete finite-volume hyperbolic metric; see \cite[Theorem 0.1]{thurstonhyp2} and \cite{otalfiber}.

\section{Proof of Theorem~\ref{modmain}}
\label{section:proof}

Since $\PMod(S_{0,n})$ is trivial for $n \leq 3$, and $\PMod(S_{0,4}) \cong F_2$ by the Birman exact sequence, there is
nothing to do for these cases.  We proceed by induction. Say that $n \geq 5$, and assume that Theorem~\ref{modmain}
holds for $\PMod(S_{0,n-1})$.  In other words, assume that if $\bar g$ and $\bar h$ are any two elements of
$\PMod(S_{0,n-1})$, then $\langle \bar g, \bar h \rangle$ is either abelian or is isomorphic to $F_2$.

Let $g$ and $h$ be two fixed elements of $\PMod(S_{0,n})$.  As in Section~\ref{bes}, let $\rho_1, \dots, \rho_n$ be the maps
$\PMod(S_{0,n}) \to \PMod(S_{0,n-1})$ obtained by filling a puncture.  We consider three possibilities for the
subgroups $\bar G_k = \langle \rho_k(g), \rho_k(h) \rangle$.  The division into cases arises as follows: either $\bar
G_k$ is isomorphic to $F_2$ for some $k$ (Case 1 below), or $\bar G_k$ is abelian for all $k$.  In the latter situation, we
consider any particular $\bar G_k$ and appeal to following dichotomy of abelian subgroups of $\PMod(S_{0,n})$, due to Ivanov \cite[Corollary 1.8 plus Corollary 8.5]{ivanov}.
\begin{quote}
\emph{Any abelian subgroup of $\PMod(S_{0,n})$ either fixes a vertex of $\C(S_{0,n})$ or
is an infinite cyclic group generated by a pseudo-Anosov.}
\end{quote}

The two possibilities for abelian subgroups are dealt with in Case 2 and Case 3.

\noindent \textbf{Case 1}. $\bar G_k \cong F_2$ for some $k$.\\

\noindent By the Hopfian property for free groups, it follows that $\langle g,h \rangle \cong F_2$.\\

\noindent \textbf{Case 2.} $\bar G_k \cong \Z$ and is generated by a pseudo-Anosov mapping class for some $k$.\\

\noindent Let $\langle f \rangle = \langle \rho_k(g),\rho_k(h) \rangle$ with $f$ pseudo-Anosov.  In this case, we know
that $\rho_k^{-1}(\langle f \rangle)$ is isomorphic to the fundamental group of the mapping torus $M_f$ (see
Section~\ref{bundles}).  Since $M_f$ is atoroidal and irreducible, a theorem of Jaco--Shalen gives that $\langle g, h \rangle$ either has finite index in $\rho_k^{-1}(\langle f
\rangle) \cong \pi_1(M_f)$, or it is isomorphic to one of the groups $1$, $\Z$, $\Z^2$, or $F_2$; see \cite[Theorem
VI.4.1]{jacoshalen}.  Thus, it suffices to show that $\langle g,h \rangle$ does not have finite index in $\pi_1(M_f)$.

Since $f \in \PMod(S_{0,n-1})$ acts trivially on $H_1(S_{0,n-1},\Z) \cong \Z^{n-2}$, a Mayer--Vietoris computation
shows that the first homology of $M_f$ is isomorphic to that of $S_{0,n-1} \times S^1$, that is
\[ H_1(M_f,\Z) \cong \Z^{n-1}.\]
Given a finite sheeted covering $M \to M_f$, we see that $H_1(M,\Z) \cong \pi_1(M)^{ab}$ contains a free abelian
group of rank $n-1 \geq 4$.  Therefore, any finite index subgroup of $\pi_1(M)$ is generated by no fewer than $4$
elements. It follows that $\langle g,h \rangle
< \rho_k^{-1}(\langle f \rangle) \cong \pi_1(M_f)$ does not have finite index, which completes the proof in Case 2.\\

\noindent \textbf{Case 3.} $\bar G_k$ is abelian for all $k$, and fixes a vertex $v \in \C(S_{0,n-1})$ for some $k$.\\

\noindent Set $G = \langle g,h \rangle$, and note that $[G,G]$ is a subgroup of $W_n$, the Brunnian subgroup of
$\PMod(S_{0,n})$.  This is because $\rho_k$ factors through the abelianization of $G$ for all $k$, and so $[G,G]$ is
contained in the intersection of the kernels of all $\rho_k$.

Choose $k$ so that $\bar G_k$ fixes a vertex $v \in \C(S_{0,n-1})$.  Then $G < \rho_k^{-1}(\bar G_k)$, and, as described
in Section \ref{cctrees}, $\rho_k^{-1}(\bar G_k)$ acts on the tree $\Pi_k^{-1}(v) \cong T_v$.  Note that
$\rho_k^{-1}(\bar G_k)$ contains $\pi_1(S_{0,n-1})$, and the action of $\pi_1(S_{0,n-1})$ on $\Pi^{-1}(v)$ is
equivalent to the action of $\pi_1(S_{0,n-1})$ on its Bass--Serre tree $T_v$.  Therefore, all edge stabilizers are nontrivial.  Moreover, $\rho_k^{-1}(\bar G_k)$ acts on $\Pi_k^{-1}(v)$ without inversions; indeed, any element of $\Mod(S_{0,n})$ that interchanges two disjoint isotopy classes of curves is easily seen to act nontrivially on the punctures of $S_{0,n}$.

According to the work of Kapovich--Weidmann \cite[Theorem 1]{kapovichweidmann}, either $G$ is abelian or free, or one
can find two new generators $s,t$ for $G$ so that one of the following holds:
\begin{enumerate}
\item $s$ and $t$ have nonzero powers that fix a common point of $\Pi_k^{-1}(v)$, or
\item $s$ and $tst^{-1}$ have nonzero powers that fix a common point of $\Pi_k^{-1}(v)$.
\end{enumerate}
We can strengthen the conclusion of the Kapovich--Wiedmann theorem by applying the following fact, due to Ivanov \cite[Corollary 1.8 plus Corollary 3.7]{ivanov}.
\begin{quote}
\emph{If $f \in \PMod(S_{0,n})$ has $f^n(v) = v$ for some vertex $v \in \C(S_{0,n})$ and some nonzero $n$, then $f(v) = v$.}
\end{quote}
We can thus conclude that either $s$ and $t$, or $s$ and $tst^{-1}$, have a common fixed point in $\Pi_k^{-1}(v)$.  Without loss of generality, let this fixed point be a vertex $u$.  It follows that the commutator $[s,t]$ also fixes $u$. However, $[s,t] \in [G,G] < W_n$, and hence $[s,t] = 1$ (see Section \ref{brunniansect}).  Therefore, $G$ is abelian, completing the proof in Case 3, and thus finishing the proof of Theorem~\ref{modmain}.

\section{Questions and Conjectures}

\paragraph{\bf Mapping class groups} As we alluded to in the introduction, one interpretation of our Main Theorem is that if $g$ and $h$ are any two elements of $\Mod(S_{0,n})$, then $\langle f^{n!}, g^{n!} \rangle$ is either abelian or free.

\begin{q}
Let $S$ be a surface of positive genus.  Is there a constant $N=N(S)>0$ so that, if $f,g \in \Mod(S)$, then $\langle f^N, g^N \rangle$ is either abelian or free?
\end{q}

It follows from \cite[Theorem 3.2]{hht} and \cite{ishida} that we can take $N = 2$ if $g$ and $h$ are Dehn twists or positive multitwists.  As mentioned above, Fujiwara provides an appropriate $N = N(S)$, under the additional assumption that $f$ and $g$ are pseudo-Anosov \cite{fujiwara2}.

\paragraph{\bf Torelli groups and congruence subgroups} Let $S$ be a closed surface.  The \emph{Torelli group} $\I(S)$ is the subgroup of $\Mod(S)$ consisting of those elements that act trivially on $H_1(S,\Z)$.

\begin{conjecture}
\label{conj:torelli} For any closed surface $S$, and any $g,h \in \I(S)$, either $g$ and $h$ commute, or
they generate a free group.
\end{conjecture}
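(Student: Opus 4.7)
The natural plan is to adapt the proof of Theorem~\ref{modmain} to the Torelli setting. Since $\I(S)$ is trivial for $S$ of genus at most one and, by Mess, is infinitely generated free in genus two, the conjecture is automatic for $g \leq 2$, so one would induct on the genus $g \geq 3$. The first task is to find an analog of ``filling a puncture'' that gives a surjection from $\I(S)$, or from a useful subgroup, onto a Torelli group of a simpler surface. Two candidates present themselves: the restriction to the Torelli group of the cut surface along a nonseparating simple closed curve, and the point-pushing Birman exact sequence for the closed surface with a distinguished marked point. Either would provide the inductive framework, together with a kernel whose action on a Bass--Serre tree is controllable along the lines of Section~\ref{cctrees}.

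Given such a reduction, the argument would split into three cases mirroring those of Section~\ref{section:proof}, where $\bar G$ denotes the image of $G = \langle g, h \rangle$ in the smaller Torelli group. In Case 1, $\bar G \cong F_2$ and the Hopfian property for free groups gives $G \cong F_2$. In Case 2, $\bar G$ is infinite cyclic generated by a pseudo-Anosov $f$, and one applies Jaco--Shalen to the mapping torus $M_f$; since Torelli elements act trivially on homology, the Mayer--Vietoris computation of Case~2 of Theorem~\ref{modmain} shows that $H_1(M_f;\Z)$ has rank one greater than the rank of $H_1$ of the fiber, which is abundant enough to rule out $G$ having finite index. In Case 3, $\bar G$ is abelian and, by Ivanov's dichotomy (which applies to $\I(S) < \Mod(S)$), fixes a simplex of the curve complex of the smaller surface; one invokes Bass--Serre theory together with Kapovich--Weidmann to produce a commutator $[s, t]$ fixing a vertex of the relevant tree.

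The main obstacle is the final step of Case 3, namely the analog of Section~\ref{brunniansect}: one needs a ``Brunnian-like'' subgroup of $\I(S)$, containing $[G, G]$ for every choice of inductive reduction, whose nontrivial elements are irreducible (and hence pseudo-Anosov). For punctured spheres this role is played by Whittlesey's theorem, and without such a statement one cannot conclude $[s, t] = 1$. The first candidate, the Johnson kernel $\mathcal{K}(S)$, fails immediately because it contains Dehn twists about separating curves, which are reducible; a deeper term of the Johnson filtration or some hybrid involving both Johnson-type information and a puncture-filling argument on an auxiliary bordered surface would seem to be required. I expect identifying the correct irreducible subgroup of $\I(S)$ to be the crux, and that a proof of Conjecture~\ref{conj:torelli} will demand genuinely new input beyond a direct transplant of the punctured-sphere argument.
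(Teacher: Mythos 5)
The statement you are asked about is stated in the paper as a \emph{conjecture}, not a theorem: the authors give no proof of Conjecture~\ref{conj:torelli}, and it is presented as open. So there is no proof in the paper to match your argument against, and your proposal does not supply one either --- by your own account it terminates at an unresolved obstruction. To your credit, the obstruction you identify is exactly the one the authors themselves single out (in their discussion of surface braid groups): the inductive scheme of Section~\ref{section:proof} transplants reasonably well except for the final step of Case~3, where one needs an analog of Whittlesey's theorem, i.e.\ a ``Brunnian-like'' subgroup containing $[G,G]$ for every inductive reduction and consisting (apart from the identity) of pseudo-Anosov elements. Your observation that the Johnson kernel fails because it contains twists about separating curves is correct and is the right sanity check.

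Two further cautions on the parts of your outline that you present as routine. First, the choice of reduction matters more than you suggest: cutting along a nonseparating curve does not obviously give a surjection of Torelli groups with a kernel acting on a tree with the properties used in Case~3 (nontrivial edge stabilizers, no inversions, and the ``periodic implies fixed'' property from Ivanov), and the point-pushing sequence changes the surface's topology (adding a marked point) rather than simplifying it, so it is not by itself an induction on complexity. Second, in Case~2 the fiber is now a closed or once-marked surface, so $M_f$ is a closed (or finite-volume) hyperbolic $3$-manifold; the finite-index exclusion must go through the transfer argument giving $b_1(M) \geq b_1(M_f) = 2g+1$ for every finite cover $M$, which does work but should be said, since closed hyperbolic $3$-manifold groups can certainly be $2$-generated in general. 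In short: you have correctly reconstructed the authors' own assessment of why the conjecture is hard, but no proof is on offer, here or in the paper.
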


In the case where $g$ and $h$ are Dehn twists or positive multitwists in either $PB_n$ or $\I(S)$, both the Main Theorem and
Conjecture~\ref{conj:torelli} have been known for some time; see \cite[Theorem 3.2]{hht} and \cite{ishida}.  On the other hand, it is unknown whether or not two non-commuting bounding pair maps in $\I(S)$ necessarily generate a free group.

Our Main Theorem would follow immediately from Conjecture~\ref{conj:torelli}, since, if the genus of $S$ is large enough, $\I(S)$ contains a split central extension of $PB_n$.

Conjecture~\ref{conj:torelli} is not true if we replace $\I(S)$ with the level $m$ congruence subgroup of the mapping class group, that is, the subgroup consisting of elements that act trivially on $H_1(S,\Z/m\Z)$.  Indeed, if $f$ is any bounding pair map, then there is an isotopy class of simple closed curves $a$ where $f(a) \neq a$ and $f(a)$ is disjoint from $a$.  Thus, $f$ and $T_a^m$ fail to commute, but $T_a^m$ commutes with the element $fT_a^mf^{-1}=T_{f(a)}^m$, and so there is a nontrivial relation in $\langle f, T_a^m \rangle$, namely, $[T_a^m,fT_a^mf^{-1}]=1$.

\paragraph{\bf Surface braid groups} Tom Church has asked us whether the Main Theorem is true for pure surface braid groups.  That is, do any two elements in any
pure surface braid group commute or generate a free group?

We conjecture that the answer is `yes'.  After rephrasing in terms of mapping class groups via Birman's work
\cite[Theorem 1]{birman}, one can see that the majority of our proof applies.  The only missing ingredient is
Whittlesey's result---the intersection of the ``fill-in-a-puncture'' homomorphisms is not purely pseudo-Anosov on a
general surface.

\newpage

\bibliographystyle{plain}
\bibliography{pbrels}

\end{document}